\documentclass{amsart} 
\usepackage{enumerate, amssymb, amsfonts, latexsym}
\usepackage{amscd, amsmath}
\usepackage{graphicx}
\newtheorem{theorem}{Theorem} 
\newtheorem{lemma}{Lemma} 
\newtheorem{definition}{Definition}

\bibliographystyle{amsalpha}

\begin{document}

\title{Order of Magnitude of Fourier Coefficients for Almost Periodic Functions}

\author{Alec Train, Rohit Jain, Will Carlson}

\date{\today}

\begin{abstract}
We provide an introduction of some basic facts of uniformly almost periodic functions, such as fourier series representations.  A result is then proved about fourier coefficients which is a generalization of the purely periodic case. We then provide an application of our estimate to the Riemann-Zeta Function. 
\end{abstract}

\maketitle

\section{Introduction}

In recent years there have been many applications for almost periodic functions. They have been studied in the context of signal processing algorithms for detection estimation and classification \cite{N12}, nonuniform sampling \cite{JF95}, dynamical systems, and homogenization theory for elliptic partial differential equations. Our interest in this paper will be to study the order of magnitude of fourier coefficients for almost periodic functions. In this section we present basic results in the theory of uniformly almost periodic functions. In section 2 we recall the construction of the fourier series for almost periodic functions. In section 3 we present our proof in the almost periodic case. Finally in the last section we present an application to analytic number theory.
\\
\\
We start by defining the notion of a uniformly almost periodic function.

\begin{definition} We define the space of uniformly almost periodic functions to be the closure of the set of all trigonometric polynomials $\{\sum\limits_{k=0}^{n} c_{k} e^{ix\lambda_k}\}$ under the sup norm $\|.\|_{\infty}$.
\end{definition}

Here the $\lambda_k$ are arbitrary real numbers, instead of integral multiples of some base frequency.
\\
\\
One of the key theorems of uniformly almost periodic functions is that this definition is equivalent to the following definition:

\begin{definition} A function $f: \mathbb{R}\rightarrow \mathbb{C}$ is uniformly almost periodic if and only if $ \forall \epsilon > 0$, there exists a number $l(\epsilon)$ and a set of translation numbers $T_{f}(\epsilon) = \{\tau_{1}(\epsilon), \tau_{2}(\epsilon), \tau_{3}(\epsilon),... \}$ such that $ \forall \tau \in T_{f}(\epsilon)$, 

$$|f(x+\tau) - f(x)| < \epsilon$$

\end{definition}

We state some basic properties for uniformly almost periodic functions. All proofs can be found in \cite{B55}. We list them here for completeness.

\begin{lemma} (Basic Properties) 
\\
a) Any uniformly almost periodic function is bounded
\\
b) Any uniformly almost periodic function is uniformly continuous
\\
c) Constant multiples, conjugate, sums, and products of uniformly almost periodic functions are uniformly almost periodic
\\
d) If the derivative of a uniformly almost periodic function is uniformly continuous then it is uniformly almost periodic.
\end{lemma}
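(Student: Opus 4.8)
The plan is to lean on the closure description (Definition 1) for parts (a)--(c), since uniform approximation by trigonometric polynomials makes these nearly immediate, and to use a difference-quotient argument for (d).

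For (a) and (b) I would first record that every trigonometric polynomial $p(x)=\sum_{k=0}^{n}c_k e^{ix\lambda_k}$ is bounded, with $\|p\|_\infty\le\sum_k|c_k|$, and uniformly continuous, since each $e^{ix\lambda_k}$ is Lipschitz with constant $|\lambda_k|$ and finite sums preserve this. Given a uniformly almost periodic $f$, Definition 1 supplies a trigonometric polynomial $p$ with $\|f-p\|_\infty\le 1$, so boundedness of $f$ follows from $\|f\|_\infty\le\|p\|_\infty+1$. For uniform continuity I would run the standard $\epsilon/3$ argument: approximate $f$ within $\epsilon/3$ by a trigonometric polynomial $p$, use uniform continuity of $p$ to obtain $\delta$, and split $|f(x)-f(y)|$ into the three pieces.

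For (c) I would again argue at the level of the dense subset and pass to the limit. Conjugation, scalar multiplication, and addition send trigonometric polynomials to trigonometric polynomials and are continuous for $\|\cdot\|_\infty$, hence map the closure into itself; for instance if $p_n\to f$ uniformly then $\overline{p_n}\to\overline f$ and $cp_n\to cf$ uniformly. The one case needing care is the product: the identity $e^{ix\lambda}e^{ix\mu}=e^{ix(\lambda+\mu)}$ shows a product of trigonometric polynomials is again one, and for $p_n\to f$, $q_n\to g$ I would estimate
\[
\|p_nq_n-fg\|_\infty\le\|p_n\|_\infty\,\|q_n-g\|_\infty+\|g\|_\infty\,\|p_n-f\|_\infty,
\]
which tends to $0$ because $\|p_n\|_\infty$ is bounded and $g$ is bounded by part (a). Hence $fg$ lies in the closure.

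The substantive part is (d). Here I would introduce the difference quotients $\phi_h(x)=\big(f(x+h)-f(x)\big)/h$ for $h\ne 0$. Each $\phi_h$ is uniformly almost periodic: a translate of a trigonometric polynomial is a trigonometric polynomial, since $e^{i(x+h)\lambda}=e^{ih\lambda}e^{ix\lambda}$, and translation commutes with uniform limits, so $x\mapsto f(x+h)$ is uniformly almost periodic; then (c) handles the difference and the scalar factor $1/h$. The key analytic step is to show $\phi_h\to f'$ uniformly as $h\to 0$. Since $\phi_h(x)$ is the average value of $f'$ on the interval with endpoints $x$ and $x+h$ (legitimate because $f'$ is continuous), I would bound
\[
|\phi_h(x)-f'(x)|\le\sup_{|t|\le|h|}|f'(x+t)-f'(x)|,
\]
and invoke the \emph{uniform} continuity of $f'$ to make the right-hand side small uniformly in $x$ once $|h|$ is small. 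Finally, since the space of uniformly almost periodic functions is a closure and hence closed under uniform limits, the uniform limit $f'$ of the $\phi_h$ is itself uniformly almost periodic. I expect the main obstacle to be precisely this uniform passage to the limit: without uniform continuity of $f'$ one obtains only pointwise convergence of the difference quotients, so the hypothesis in (d) is exactly what upgrades convergence to the sup norm, where the closedness of the class can be applied.
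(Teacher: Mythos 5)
Your proof is correct, but note that the paper itself does not prove this lemma: it states the properties and defers entirely to Besicovitch \cite{B55}, so the comparison is really with the cited source. Relative to that source, your route is genuinely different. Besicovitch develops the theory from the translation-number definition (Definition 2 of the paper): there, boundedness and uniform continuity are proved by covering $\mathbb{R}$ with translates of a compact interval via a relatively dense set of $\epsilon$-translation numbers, and closure under sums and products rests on the nontrivial theorem that two such functions admit a \emph{common} relatively dense set of $\epsilon$-translation numbers. You instead take Definition 1 (the sup-norm closure of trigonometric polynomials) as the starting point, which turns (a)--(c) into soft statements about uniform limits: each operation preserves the dense set of trigonometric polynomials and is sup-norm continuous, and your product estimate $\|p_nq_n-fg\|_\infty\le\|p_n\|_\infty\,\|q_n-g\|_\infty+\|g\|_\infty\,\|p_n-f\|_\infty$ is exactly the right splitting. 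What this buys is brevity and transparency; what it costs is that it silently leans on the equivalence of the two definitions (Bohr's approximation theorem, the deep result of the subject), which anyone starting from Definition 2 would still need before your argument applies. Your part (d) --- difference quotients $\phi_h$ are uniformly almost periodic by translation invariance together with (c), they converge uniformly to $f'$ precisely because $f'$ is \emph{uniformly} continuous, and the class is closed under uniform limits --- is the standard argument in the literature, and your closing observation correctly isolates uniform continuity as the hypothesis that upgrades pointwise convergence of $\phi_h$ to sup-norm convergence, where closedness of the class can be invoked.
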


Our goal is to study the order of magnitude of fourier coefficients for almost periodic functions. Our inspiration are similar estimates for periodic functions. We turn to some definitions that will be useful later.

\begin{definition} The variation of a function $f:\mathbb{R} \to \mathbb{R}$ is 
$$V_{\mathbb{R}}(f) = \sup_{a<b} V_{[a,b]}(f)$$ where $V_{[a,b]}(f) = \sup \{S_{\Gamma} : \Gamma \; \text{is a partition of} \; [a,b]\}$ and $S_{\Gamma} = \sum_{i = 1}^{n} |f(x_{i}) - f(x_{i-1})|$ for any finite partition 
$\Gamma = \{a = x_{0} < ... < x_{n} = b\}$.
\\
\\
We say that a function $f$ has bounded variation if $V_{\mathbb{R}}(f) < \infty$. Furthermore we define the space of bounded variation (BV):
$$BV(\mathbb{R}) = \{f : \mathbb{R} \to \mathbb{R} \; : \text{f has finite total variation on} \; \mathbb{R} \}$$
\end{definition}

We modify the classical definition by imposing a growth estimate on the total variation in order to accomodate uniformly almost periodic functions:

\begin{definition} We define the average total variation of a function $f:\mathbb{R}\rightarrow \mathbb{R}$ to be the following limit:

$$\bar{V}_{\mathbb{R}}(f) = \lim_{T\rightarrow \infty} \frac{1}{T} V_{[0,T]}(f)$$

where $ V_{[0,T]}(f)$ is the total variation of $f$ on the interval $[0,T]$. We say that a function $f$ has average bounded variation if $\bar{V}_{\mathbb{R}}(f) < \infty$. Furthermore we define the space of average bounded variation ($\bar{BV}$):
$$\bar{BV}(\mathbb{R}) = \{f : \mathbb{R} \to \mathbb{R} \; : \text{f has finite average total variation on} \; \mathbb{R} \}$$
\end{definition}

We remark that average total variation agrees with the total variation of a 1-periodic function on compact domains. We now turn to the theorem we would like to generalize \cite{T67},

\begin{theorem} (Taibleson '67) Suppose f is a 1-periodic function on $[0,1]$ which is continuous and of bounded variation. Given the fourier series representation of $f(x)$,

$$f \sim \sum_{j \in Z} c_j e^{i2\pi j x}$$

Then,

$$|c_j| \leq \frac{\|f\|_{BV}}{2\pi j}$$
\end{theorem}

\section{Fourier Series of Almost Periodic Functions}

In this section, we discuss the construction of fourier series for almost periodic functions.  The are three classical results we will describe which involve a notion of mean value of an almost periodic function, a theorem which states that any uniformaly almost periodic function may be represented by a fourier series and provides the construction of the series via a formula for the series coefficients, and finally a result about derivatives of uniformly almost periodic functions.  We refer to the book by C. Corduneanu \cite{C61} for all of these results. We state them here for completeness. 
\begin{lemma} If $f(x)$ is an almost periodic function, then
$$\lim_{T\rightarrow\infty} \int^{a+T}_a f(x) dx = M\{f(x)\}$$
exists uniformly with respect to $a$.
$M\{f(x)\}$ is independent of $a$ and is called the mean value of the almost periodic function $f(x)$.
\end{lemma}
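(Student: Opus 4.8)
The plan is to reduce to the case of trigonometric polynomials, where the mean value can be computed explicitly, and then transfer the conclusion to an arbitrary uniformly almost periodic function using the sup-norm approximation guaranteed by Definition 1. Throughout, write $M_T(f,a) = \frac{1}{T}\int_a^{a+T} f(x)\,dx$ for the Ces\`aro-type average whose limit is the mean value $M\{f\}$; the assertion to be proved is that $M_T(f,a)$ converges as $T\to\infty$, uniformly in $a$, to a limit that does not depend on $a$.

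First I would establish the claim for a single exponential $e_\lambda(x) = e^{i\lambda x}$. When $\lambda = 0$ the average equals $1$ for every $T$ and every $a$. When $\lambda \neq 0$, a direct integration gives $M_T(e_\lambda, a) = \frac{e^{i\lambda a}(e^{i\lambda T}-1)}{i\lambda T}$, so that $|M_T(e_\lambda,a)| \le \frac{2}{|\lambda|\,T}$, a bound that is independent of $a$ and tends to $0$ as $T\to\infty$. By linearity, for a trigonometric polynomial $P = \sum_{k=0}^{n} c_k e^{i\lambda_k x}$ the average $M_T(P,a)$ converges, uniformly in $a$, to the sum of those $c_k$ for which $\lambda_k = 0$. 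In particular the limit exists, is finite, and is independent of $a$.

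To pass to a general uniformly almost periodic $f$, fix $\epsilon > 0$ and use Definition 1 to choose a trigonometric polynomial $P$ with $\|f - P\|_{\infty} < \epsilon/3$. The elementary bound $|M_T(f,a) - M_T(P,a)| \le \frac{1}{T}\int_a^{a+T}|f-P|\,dx \le \|f-P\|_{\infty}$ holds for every $T$ and every $a$ simultaneously, so the approximation error between the two averages is controlled uniformly. A standard $\epsilon/3$ argument then shows that $T \mapsto M_T(f,a)$ is uniformly Cauchy: for $T, T'$ large the middle term $|M_T(P,a) - M_{T'}(P,a)|$ is $< \epsilon/3$ uniformly in $a$ by the previous step, while the two outer terms are each $< \epsilon/3$ by the sup-norm estimate. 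The uniform Cauchy criterion yields a limit $M\{f\}(a)$. Since every $M_T(f,a)$ lies within $\epsilon/3$ of the $a$-independent constant $\lim_{T} M_T(P,a)$, the limit $M\{f\}(a)$ lies within $\epsilon/3$ of that same constant for all $a$; letting $\epsilon \to 0$ forces $M\{f\}(a)$ to be independent of $a$.

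The main obstacle, and the reason the statement is not immediate, is maintaining uniformity in $a$ at each stage. The two enabling facts are that the single-frequency estimate $2/(|\lambda|T)$ carries no dependence on $a$, and that sup-norm approximation controls the averaged integral uniformly in both $T$ and $a$; once these uniformities are secured, the remainder is the $\epsilon/3$ bookkeeping above. An alternative, more self-contained route would work directly from the translation-number characterization of Definition 2, comparing $\int_a^{a+T} f$ across shifted windows and invoking a Fej\'er-type averaging; this avoids density of trigonometric polynomials but demands more delicate estimates, so I would favor the approximation argument given that Definition 1 is available.
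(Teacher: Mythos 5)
Your proof is correct, and it takes a genuinely different route from the one the paper relies on: the paper does not prove this lemma at all, but defers to Corduneanu \cite{C61}, whose classical argument (going back to Bohr) works from the translation-number characterization of Definition 2. There one compares $\frac{1}{T}\int_a^{a+T}f$ with $\frac{1}{T}\int_0^{T}f$ by choosing an $\epsilon$-translation number $\tau$ close to $a$, with an error controlled by $\epsilon$ and by $2Ml(\epsilon)/T$ where $M=\|f\|_{\infty}$ and $l(\epsilon)$ is the inclusion length, and then shows the averages form a Cauchy family by splitting long intervals into blocks; uniformity in $a$ comes from the relative density of $T_f(\epsilon)$. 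You instead start from Definition 1 (sup-norm closure of trigonometric polynomials), compute the mean explicitly for a single exponential --- where the bound $2/(|\lambda|T)$ is visibly independent of $a$ --- and transfer to general $f$ by a uniform $\epsilon/3$ approximation; you also silently repair the paper's typo by restoring the normalizing factor $\frac{1}{T}$ in the statement, which is the correct reading. Given the paper's choice of Definition 1 as the definition of uniform almost periodicity, your argument is complete, and it is shorter and cleaner than the classical one. The one caveat you should state explicitly: in the traditional development, the equivalence of Definitions 1 and 2 (the approximation theorem) is proved \emph{after}, and by means of, the mean value theorem, since the mean is needed to define Fourier coefficients and the Bochner--Fej\'{e}r polynomials; so your argument inverts the classical logical order and would be circular in a treatment that takes Definition 2 as primitive. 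Within this paper's framework there is no circularity, but the dependence on Definition 1 --- rather than on Definition 2, which your closing remark correctly identifies as the more self-contained but more delicate route --- is exactly what makes the short proof possible, and is worth flagging.
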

This result defines the mean values $M\{f(x)\}$, and we refer to Corduneanu \cite{C61} for the proof.
Now define $a(\lambda)=M\{f(x)e^{-i\lambda x}\}$.
\begin{lemma} If $f$ is almost periodic, there exists at most a countable set of $\lambda$'s for which $a(\lambda) \neq 0$.
\end{lemma}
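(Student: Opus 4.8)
The plan is to exploit the orthogonality of the exponentials $e^{i\lambda x}$ under the mean value $M\{\cdot\}$ and to deduce countability from a Bessel-type inequality. First I would record that, by Lemma 2, the mean value is a well-defined linear functional on the algebra of uniformly almost periodic functions, so that $\langle g,h\rangle := M\{g(x)\overline{h(x)}\}$ behaves formally like an inner product. The elementary computation
$$M\{e^{i\alpha x}\} = \lim_{T\to\infty}\frac{1}{T}\int_0^T e^{i\alpha x}\,dx$$
shows this mean equals $1$ when $\alpha=0$ and $0$ otherwise, since for $\alpha\neq 0$ the integral is bounded independently of $T$. Hence the family $\{e^{i\lambda x}\}$ is orthonormal with respect to $\langle\cdot,\cdot\rangle$, and the numbers $a(\lambda)=M\{f(x)e^{-i\lambda x}\}$ are precisely the coordinates of $f$ against this orthonormal family.

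Next I would prove Bessel's inequality. Fix finitely many distinct reals $\lambda_1,\dots,\lambda_n$ and start from the nonnegativity
$$0 \le M\left\{\Bigl|f(x) - \sum_{k=1}^{n} a(\lambda_k)\,e^{i\lambda_k x}\Bigr|^2\right\}.$$
Expanding the square and using linearity of $M\{\cdot\}$ together with the orthonormality above, the off-diagonal terms vanish and the diagonal terms cancel against the cross terms, leaving
$$\sum_{k=1}^{n} |a(\lambda_k)|^2 \le M\{|f(x)|^2\}.$$
Here Lemma 1 guarantees the right-hand side is a finite constant: $f$ is bounded by part (a), and $|f|^2 = f\bar f$ is again uniformly almost periodic by part (c), so its mean value exists by Lemma 2 and is dominated by $\|f\|_\infty^2$.

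Finally I would convert this uniform bound into a counting statement. Because $\sum_{k}|a(\lambda_k)|^2 \le M\{|f|^2\}$ holds for \emph{every} finite selection of distinct frequencies, for each $\epsilon>0$ the set $\{\lambda : |a(\lambda)|>\epsilon\}$ must be finite: otherwise we could select $n$ such frequencies with $n\epsilon^2 > M\{|f|^2\}$, contradicting the inequality. Writing the support as the countable union
$$\{\lambda : a(\lambda)\neq 0\} = \bigcup_{n\ge 1}\Bigl\{\lambda : |a(\lambda)| > \tfrac{1}{n}\Bigr\}$$
exhibits it as a countable union of finite sets, hence countable, which is the claim.

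The main obstacle is establishing Bessel's inequality rigorously, and in particular justifying that the mean value distributes over the finite sum and that the off-diagonal exponential terms truly vanish. Linearity of $M\{\cdot\}$ over finite combinations is immediate, but one must ensure that $M\{|f|^2\}$ genuinely exists as a limit rather than merely being bounded; this is exactly where the uniform convergence asserted in Lemma 2, combined with the closure of the almost periodic functions under products in Lemma 1(c), carries the argument. Once orthonormality and the finiteness of $M\{|f|^2\}$ are secured, the passage to countability is routine.
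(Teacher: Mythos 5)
Your proof is correct, but there is nothing in the paper to compare it against: immediately after stating this lemma the paper says ``The proof we omit as the technical details are not required for what follows,'' deferring to Corduneanu's book \cite{C61}. What you have written is in fact the classical argument found there (and in Besicovitch), and all three of your steps are sound. The orthogonality computation is right: for $\alpha \neq 0$ one has $\int_0^T e^{i\alpha x}\,dx = (e^{i\alpha T}-1)/(i\alpha)$, bounded by $2/|\alpha|$ uniformly in $T$, so the average tends to $0$, while it is identically $1$ when $\alpha = 0$. The Bessel inequality is legitimate because the function $\bigl|f(x) - \sum_{k=1}^n a(\lambda_k)e^{i\lambda_k x}\bigr|^2$ is itself uniformly almost periodic (closure under sums, products, and conjugation, Lemma 1(c)), so its mean exists by Lemma 2 and is nonnegative as a limit of averages of a nonnegative function; expanding and using $M\{\bar{f}(x)e^{i\lambda_k x}\} = \overline{a(\lambda_k)}$ together with orthogonality gives $\sum_{k=1}^n |a(\lambda_k)|^2 \le M\{|f|^2\} \le \|f\|_\infty^2 < \infty$, exactly as you say. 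The counting step is then routine: each set $\{\lambda : |a(\lambda)| > 1/n\}$ has at most $n^2 M\{|f|^2\}$ elements, and the support of $a$ is the union of these over $n$, hence countable. The only items worth stating explicitly in a final write-up are the linearity of $M$ over finite sums (immediate from linearity of the integral and of limits) and the conjugation identity just mentioned; neither presents any difficulty, so your proof fills the gap the paper leaves open.
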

The proof we omit as the technical details are not required for what follows.
\\
\\
This result is important because the numbers $\lambda_1, \lambda_2, ..., \lambda_n, ...$ for which $a(\lambda_k) \neq 0$ are called the fourier exponents of the function $f(x)$, and $a(\lambda_k)$ are the fourier coefficients of $f(x)$.  Now we will define $A_k=a(\lambda_k)$ and write the expansion of the function $f$ as follows:
$$f(x)\sim\sum_{k=1}^\infty A_ke^{i\lambda_kx}$$
\\
We may note here that if $f(x)$ is a periodic function, then the fourier series defined this way will coincide with the usual fourier series from the theory of periodic functions.
\begin{lemma} If the derivative (primitive) of an almost periodic function is almost periodic, then its fourier series can be obtained by formal differentiation (integration).
\end{lemma}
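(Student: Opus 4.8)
The plan is to prove the two assertions as duals of one another: I would establish the differentiation statement directly and then deduce the integration statement by applying it to a primitive. For the differentiation claim, let $f \sim \sum_k A_k e^{i\lambda_k x}$ with $A_k = M\{f(x)e^{-i\lambda_k x}\}$, and suppose $f'$ is almost periodic. Since $f'$ is then almost periodic it is in particular continuous and bounded, so $f$ is $C^1$ and its mean values exist by the mean value lemma. The goal is to compute the Fourier coefficient of $f'$ at an arbitrary real frequency $\lambda$, namely $b(\lambda) := M\{f'(x)e^{-i\lambda x}\}$, and to show that $b(\lambda) = i\lambda\,a(\lambda)$.

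First I would write the defining averaged integral over $[0,T]$ and integrate by parts:
$$\frac{1}{T}\int_0^T f'(x)e^{-i\lambda x}\,dx = \frac{1}{T}\Bigl[f(x)e^{-i\lambda x}\Bigr]_0^T + \frac{i\lambda}{T}\int_0^T f(x)e^{-i\lambda x}\,dx.$$
The boundary term is $\tfrac{1}{T}\bigl(f(T)e^{-i\lambda T} - f(0)\bigr)$, and since every uniformly almost periodic function is bounded (Lemma 1a) this is $O(1/T)$ and vanishes as $T \to \infty$. The second term tends to $i\lambda\,M\{f(x)e^{-i\lambda x}\} = i\lambda\,a(\lambda)$ by the mean value lemma. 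Hence $b(\lambda) = i\lambda\,a(\lambda)$ for every $\lambda$.

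From this the whole statement follows. The Fourier exponents of $f'$ are exactly those $\lambda$ with $b(\lambda) \neq 0$, i.e. the nonzero $\lambda_k$ with $A_k \neq 0$; the frequency $\lambda = 0$ automatically drops out because of the factor $i\lambda$, matching the fact that the constant term of $f$ differentiates to zero. Thus $f' \sim \sum_k i\lambda_k A_k e^{i\lambda_k x}$, which is precisely the series obtained by differentiating $f \sim \sum_k A_k e^{i\lambda_k x}$ term by term. For the integration statement, I would let $F$ be a primitive of $f$ and assume $F$ is almost periodic; applying the result just proved to $F$ gives $a(\lambda) = i\lambda\,c(\lambda)$, where $c(\lambda)$ denotes the Fourier coefficient of $F$, so that $c(\lambda) = a(\lambda)/(i\lambda)$ for $\lambda \neq 0$. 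This is exactly formal integration, with the undetermined coefficient $c(0) = M\{F\}$ playing the role of the constant of integration.

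The approach is essentially computational, and I do not expect a serious obstacle; the only points requiring care are justifying that the boundary term is negligible and that the relevant mean values exist. Both are handled by the hypotheses: boundedness of $f$ controls the boundary term, while the assumption that $f'$ (respectively the primitive) is almost periodic is exactly what guarantees that its mean value, and hence its Fourier series, is well defined. This is what lets us read $b(\lambda) = i\lambda\,a(\lambda)$ as an identity between genuine Fourier coefficients rather than a purely formal manipulation.
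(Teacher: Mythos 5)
Your proposal is correct and follows essentially the same argument as the paper: integration by parts on the averaged integral $\frac{1}{T}\int f'(x)e^{-i\lambda x}\,dx$, with the boundary term killed by boundedness, yielding the coefficient relation $A'_k = i\lambda_k A_k$. If anything, your handling of the integration half is cleaner than the paper's, since you correctly take almost periodicity of the primitive $F$ as a hypothesis and apply the differentiation result to $F$, whereas the paper asserts that almost periodicity of $F$ ``follows'' --- a claim its own subsequent remark contradicts.
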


\begin{proof}Let $f(x)$ be an almost periodic function with an almost periodic derivative.  Then the mean value $M\{f'(x)e^{-i\lambda x}\}$ exists, and furthermore we claim that $M\{f'(x)e^{i\lambda x}\}=i\lambda M\{f(x)\}$.  To see this, we note that $$\frac{1}{T}\int_a^{a+T}f'(x)e^{-i\lambda x} dx=\frac{1}{T}f(x)e^{-i\lambda x}\rvert_a^{a+T}+i\lambda\frac{1}{T}\int_a^{a+T}f(x)e^{-i\lambda x}dx$$ and then take the limit $T\rightarrow\infty$.
Now from the claim we infer that $f'(x)$ has the same fourier exponents as $f(x)$ except for possibly $\lambda=0$ if it appears as a fourier exponent of $f$.  If we define $A'_k$ to be the fourier coefficients of $f'(x)$, then the relation says that:
$$A'_k=i\lambda_kA_k$$
Therefore $f'(x)\sim\sum_{k=1}^\infty i\lambda_kA_ke^{i\lambda_k x}$.
\\
From this it follows that the primitive $F(x)$ is almost periodic and we have:
$$F(x)=\int_0^xf(t)dt\sim C+\sum_{k=1}^\infty\frac{}{}e^{i\lambda_k x}$$
\end{proof}
A careful remark from Corduneanu's book \cite{C61} points out that the $\lambda=0$ cannot occur among the fourier exponents of an almost periodic function which is the derivative of another almost periodic function, which means this formula is valid even though $\lambda_k$ occur in the denominator.  Note that it is necessary but not sufficient that $\lambda = 0$ does not occur to have that the primitive of an almost periodic function is almost periodic.

\section{The main result}
\label{sec:examples}
We now turn to our contribution.

\begin{theorem} Suppose f is a uniformly almost periodic function on $\mathbb{R}$ which is also of average bounded variation $\|f\|_{\bar{BV}} < \infty$.  Given the fourier series representation for $f(x)$, $$f(x)\sim\sum_{k=1}^\infty A_ke^{i\lambda_kx}$$ 

Then, 

$$|A_j| \leq \frac{\|f\|_{\bar{BV}}}{\lambda_j}$$

\end{theorem}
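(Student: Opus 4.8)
The plan is to mimic the proof of Taibleson's theorem (Theorem 1), replacing the integral over one period by the mean value and replacing ordinary integration by parts with its Riemann--Stieltjes analogue. First I would write the Fourier coefficient as the mean value $A_j = M\{f(x)e^{-i\lambda_j x}\} = \lim_{T\to\infty}\frac1T\int_0^T f(x)e^{-i\lambda_j x}\,dx$, where by Lemma 2 this limit exists and may be taken with base point $a=0$. The argument then reduces to controlling the averaged integral $\frac1T\int_0^T f(x)e^{-i\lambda_j x}\,dx$ for finite $T$ and passing to the limit.

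Before integrating by parts I would record that $f$ has finite total variation on every compact interval: if $V_{[0,T_0]}(f)=\infty$ for some $T_0$, then monotonicity of the variation forces $V_{[0,T]}(f)=\infty$ for all $T\ge T_0$, whence $\frac1T V_{[0,T]}(f)\to\infty$, contradicting $\|f\|_{\bar{BV}}<\infty$. Since $f$ is also uniformly continuous (Lemma 1), the Riemann--Stieltjes integral $\int_0^T e^{-i\lambda_j x}\,df(x)$ is well defined on each $[0,T]$ and the integration-by-parts formula applies. Setting $g(x)=e^{-i\lambda_j x}/(-i\lambda_j)$, so that $g'(x)=e^{-i\lambda_j x}$, I obtain
$$\int_0^T f(x)e^{-i\lambda_j x}\,dx = \bigl[f(x)g(x)\bigr]_0^T - \int_0^T g(x)\,df(x).$$

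Dividing by $T$ and taking absolute values gives
$$\left|\frac1T\int_0^T f(x)e^{-i\lambda_j x}\,dx\right| \le \frac1T\bigl|\bigl[f(x)g(x)\bigr]_0^T\bigr| + \frac1T\left|\int_0^T g(x)\,df(x)\right|.$$
The first term tends to $0$ as $T\to\infty$, since $f$ is bounded (Lemma 1) and $|g|\equiv 1/|\lambda_j|$, so the bracket stays bounded while the factor $1/T$ vanishes. For the second term I would use the standard estimate $\bigl|\int_0^T g(x)\,df(x)\bigr|\le \|g\|_\infty\,V_{[0,T]}(f)=\frac1{|\lambda_j|}V_{[0,T]}(f)$. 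Letting $T\to\infty$ and invoking the definition of average total variation yields $|A_j|\le \frac1{|\lambda_j|}\bar V_{\mathbb R}(f)=\frac{\|f\|_{\bar{BV}}}{\lambda_j}$.

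The step I expect to be the main obstacle is the rigorous justification of the Riemann--Stieltjes integration by parts and the variation estimate for a merely continuous function of bounded variation that is not assumed differentiable, together with the interchange of the limit in $T$ with these estimates. The cleanest route is to establish the displayed inequality for each fixed $T$ first and only afterwards let $T\to\infty$, so that the definition of $\bar V_{\mathbb R}(f)$ supplies the bound directly. A secondary point to address is that total variation was defined for real-valued functions whereas $f$ may be complex-valued; this is handled either by decomposing $f$ into real and imaginary parts or by extending the total variation and the Stieltjes estimate to complex-valued integrators, for which the bound above persists.
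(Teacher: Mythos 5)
Your proof is correct, and although it shares the paper's overall skeleton (write $A_j$ as the averaged integral $\lim_{T\to\infty}\frac{1}{T}\int_0^T f(x)e^{-i\lambda_j x}\,dx$, integrate by parts, discard the boundary term, and bound what remains by the variation), it differs from the paper's own argument in two substantive ways, both to your advantage. First, the paper integrates by parts classically, producing $\int_0^{p_m} f'(x)e^{-i\lambda_j x}\,dx$ and then the bound $\int_0^{p_m}|f'(x)|\,dx \le V_{[0,p_m]}(f)$; this tacitly assumes $f$ is differentiable (indeed absolutely continuous), which is not among the theorem's hypotheses. Your Riemann--Stieltjes integration by parts, with the estimate $\bigl|\int_0^T g(x)\,df(x)\bigr| \le \|g\|_\infty V_{[0,T]}(f)$, needs only uniform continuity (Lemma 1) together with bounded variation on compact intervals, which you correctly deduce from $\|f\|_{\bar{BV}}<\infty$; so your argument establishes the theorem in the generality in which it is actually stated, whereas the paper's proof covers only the smooth case. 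Second, the paper controls the boundary term $g(p_m)-g(0)$, where $g(x)=f(x)e^{-i\lambda_j x}$, by restricting to a sequence $p_m$ lying in a set of $\epsilon$-translation numbers of $g$ (built via a common-translation-number argument at scale $\epsilon/32M$), making the bracket smaller than $\epsilon$; you observe that this machinery is unnecessary, since boundedness of $f$ (Lemma 1) already keeps the bracket $O(1)$ and the factor $1/T$ annihilates it in the limit. In short, the paper's route is natural if one pictures $f$ as smooth, while yours is simpler at the boundary term and strictly more general at the variation step. The loose ends you flag --- complex-valued integrators and the justification of Stieltjes integration by parts for a continuous integrand against a BV integrator --- are standard and are handled exactly as you propose.
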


\begin{proof}

We begin by writing

$$A_j = \lim_{p \rightarrow \infty} \frac{1}{p} \int_0^p f(x) e^{-i\lambda_j x} dx$$

Fix $\epsilon > 0$.  Since $f(x)$ and the exponential function $e^{-i\lambda_{j}x}$ are almost periodic, so is there product, which we denote by $g(x)$.  Hence we can find a set $T_{g}(\epsilon)$ and a positive number $l(\epsilon)$ such that 
$$|g(x+\tau) - g(x)| < \epsilon$$ 

whenever $\tau \in T_{g}(\epsilon)$, and any interval of length $l(\epsilon)$ has non-empty intersection with $T_{g}(\epsilon)$. This follows from taking a common element of $T_{f}(\frac{\epsilon}{32M})$ and $T_{e^{-i\lambda_{j}x}}(\frac{\epsilon}{32M})$ where $M = \|f\|_{L^{\infty}}$.  Furthermore, since we know that the above limit exists, we may take any sequence $p_k \to \infty$. For convenience, we take it so that each $p_k \in T_{g}(\epsilon) \; \forall k$. We consider for a fixed m,

$$\frac{1}{p_m} \int_0^{p_m} f(x) e^{-i\lambda_j x} dx$$

Integrating by parts yields

$$\frac{1}{-i\lambda_k p_m} \left(g(p_k) - g(0) - \int_{0}^{p_m} f'(x)e^{-i\lambda_k x} dx\right) $$

Taking the absolute value of this and recalling that, $ |\lim_{x \to \infty} f(x)| = \lim_{x \to \infty} |f(x)|$,

$$|A_j| \leq \lim_{m \rightarrow \infty} \left(\frac{1}{\lambda_{k} p_{m}} (|g(p_{m}) - g(0)| + \int_0^{p_m} |f'(x)| dx\right)$$

Using the almost periodicity and the linear bounded variation assumption,

$$|A_j| \leq \lim_{m \rightarrow \infty} \left(|\frac{1}{\lambda_k p_m}| \left(\epsilon + V_{[0,p_{m}]}(f)\right)\right)$$

The first term in the above limit converges to $0$ and the second term converges to the average bounded variation. Hence,

$$|A_j| \leq \frac{\|f\|_{\bar{BV}}}{\lambda_j}$$

\end{proof}

The resulting bound also extends to $C^n$ uniformly almost periodic functions under the uniform continuity hypothesis for the derivative of a uniformly almost periodic function $f(x)$. This ensures as observed above that the derivative is also a uniformly periodic function.   

\begin{theorem}
Suppose that $f,f^{'},\dots,f^{n}$ are uniformly almost periodic functions, and that $f^{n}(x)$ has average bounded variation.  Given the fourier series representation for $f(x)$

$$f(x)\sim\sum_{k=1}^\infty A_{k}e^{i\lambda_kx}$$ 

Then, 
$$|A_{j}| \leq \frac{\|f^{n}\|_{\bar{BV}}}{\lambda_{j}^{n+1}}$$
\end{theorem}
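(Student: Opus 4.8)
The plan is to iterate the first theorem. The idea is simple: the first main theorem bounds the Fourier coefficients of a uniformly almost periodic function in average-bounded-variation terms, and Lemma 3 tells us how the Fourier coefficients transform under differentiation. Since $f, f', \dots, f^{(n)}$ are all uniformly almost periodic by hypothesis, I can apply Lemma 3 repeatedly to conclude that the $k$-th Fourier coefficient of $f^{(n)}$ equals $(i\lambda_k)^n A_k$, where $A_k$ is the $k$-th coefficient of $f$. This is the structural backbone of the argument and requires no new estimates.

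First I would set up the relationship between the coefficients. By Lemma 3, if $f$ has Fourier series $f(x) \sim \sum_k A_k e^{i\lambda_k x}$, then $f'(x) \sim \sum_k (i\lambda_k) A_k e^{i\lambda_k x}$. Applying this $n$ times (valid because each intermediate derivative is itself uniformly almost periodic, so Lemma 3 applies at every stage) gives that the Fourier coefficient of $f^{(n)}$ corresponding to exponent $\lambda_k$ is $(i\lambda_k)^n A_k$. I would denote this coefficient by $A_k^{(n)}$, so $A_k^{(n)} = (i\lambda_k)^n A_k$.

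Next I would apply the preceding theorem (the first main result) to the function $f^{(n)}$, which is uniformly almost periodic and has average bounded variation by hypothesis. That theorem yields
$$|A_j^{(n)}| \leq \frac{\|f^{(n)}\|_{\bar{BV}}}{\lambda_j}.$$
Substituting $A_j^{(n)} = (i\lambda_j)^n A_j$ and taking absolute values gives $|A_j^{(n)}| = \lambda_j^n |A_j|$ (using $|i\lambda_j| = \lambda_j$ for the relevant exponents, which are nonzero). Dividing through by $\lambda_j^n$ then delivers
$$|A_j| \leq \frac{\|f^{(n)}\|_{\bar{BV}}}{\lambda_j^{n+1}},$$
which is exactly the claimed bound.

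The main subtlety, rather than an obstacle, is justifying the repeated application of Lemma 3. I should verify that each successive derivative $f^{(m)}$ for $m < n$ genuinely has the form required — in particular that $\lambda = 0$ does not obstruct the differentiation formula, as flagged in the remark following Lemma 3. Since we assume every derivative up through $f^{(n)}$ is uniformly almost periodic, the differentiation formula applies cleanly at each step and the exponents $\lambda_k$ are shared (up to the possible $\lambda = 0$ term, which contributes nothing to the bound for $j$ with $\lambda_j \neq 0$). Everything else is a one-line substitution, so the proof is short once the iterated coefficient identity is in place.
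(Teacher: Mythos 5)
Your proposal is correct and follows essentially the same route as the paper: iterate the term-by-term differentiation lemma to get the coefficient identity $A_j^{(n)} = (i\lambda_j)^n A_j$, apply the first main theorem to $f^{(n)}$, and divide by $\lambda_j^n$. Your additional care about the $\lambda = 0$ exponent and the validity of Lemma 3 at each intermediate derivative is a welcome refinement of what the paper states tersely, but it is the same argument.
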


\begin{proof} Since we can do term-by-term differentiation as shown above, we have that 

$$f^{n}(x)=\sum_{j \in Z} A_j(i\lambda_j)^n e^{i\lambda_j x}$$

So by Proposition 1, 
$$|A_j(i\lambda_j)^n| \leq \frac{\|f^{n}\|_{\bar{BV}}}{\lambda_j}$$

Hence,

$$|A_j| \leq \frac{\|f^{n}\|_{\bar{BV}}}{\lambda_j^{n+1}}$$

\end{proof}

\section{Application to Analytic Number Theory}

Uniformly almost periodic functions were first introduced by the mathematician and Olympic silver medalist Harald Bohr, brother of Niels Bohr.  Almost periodic functions were originally studied in finite truncations of the Riemann zeta function.  Studying the function through this method gets around using analytic continuation to study the function outside the region for which the usual Dirichlet series is defined. Recall:
 
\begin{definition} (Riemann-Zeta Function) $\zeta(x+iy) = \sum\limits_{n \geq 1} n^{-x}n^{-iy}$
\end{definition}

We rewrite, $\zeta(x+iy) = \sum\limits_{n \geq 1} n^{-x}e^{-iy \log n}$. For a fixed x and a fixed N, we consider the truncated function:
$$\zeta_{x,N}(y) = \sum_{n = 1}^{N} n^{-x} e^{-iy \log n}$$
Since we are now considering an almost periodic function with incommensurable frequencies $\lambda_{n} = \log(n)$.  We can estimate the variation $\|\zeta_{x,N}(y)\|_{\bar{BV}}$. In particular,
\begin{theorem}
Define $\zeta_{x,N}(y)^{J}$ to be the $J$-th derivative of the partial sum. Hence for all J, we get:
$$\|\zeta_{1/2,N}(y)^{J}\|_{\bar{BV}} \geq \max\limits_{(1,...,N)} n^{-1/2} (\log n)^{J+1}$$
\end{theorem}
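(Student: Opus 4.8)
The plan is to observe that the truncated zeta function $\zeta_{1/2,N}(y)$ is itself a finite trigonometric polynomial, so it coincides with its own Fourier series, and then to apply the $C^n$ estimate proved above ``in reverse,'' converting the upper bound on Fourier coefficients into a lower bound on the average bounded variation of the derivative.

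First I would read off the Fourier data of $\zeta_{1/2,N}$. Writing
$$\zeta_{1/2,N}(y) = \sum_{n=1}^{N} n^{-1/2} e^{-iy\log n},$$
the exponents are $\lambda_n = \log n$ and the coefficients are $A_n = n^{-1/2}$; since the values $\log n$ are distinct, there is no ambiguity in this identification. All derivatives $\zeta_{1/2,N}(y), \zeta_{1/2,N}(y)', \dots, \zeta_{1/2,N}(y)^{J}$ are again finite trigonometric polynomials, hence uniformly almost periodic, and each has bounded derivative, so $\zeta_{1/2,N}(y)^{J}$ is of average bounded variation. Thus the hypotheses of the preceding theorem are met with derivative order equal to $J$.

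Applying that theorem to $f = \zeta_{1/2,N}$ with $J$ derivatives, the bound for the coefficient attached to the exponent $\lambda_n = \log n$ reads
$$n^{-1/2} = |A_n| \leq \frac{\|\zeta_{1/2,N}(y)^{J}\|_{\bar{BV}}}{(\log n)^{J+1}}.$$
Rearranging gives $\|\zeta_{1/2,N}(y)^{J}\|_{\bar{BV}} \geq n^{-1/2}(\log n)^{J+1}$, and since this holds for each $n \in \{1,\dots,N\}$, taking the maximum over $n$ yields exactly the asserted inequality. Equivalently, one may instead differentiate term by term (Lemma 3) to obtain the coefficients $n^{-1/2}(\log n)^{J}$ of $\zeta_{1/2,N}(y)^{J}$ and apply the main theorem of Section 3 directly; the two routes give the same conclusion.

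The algebra here is immediate, so the only real care is bookkeeping, and that is where I expect the subtleties to lie. The definitions of variation and the two estimates are phrased for real-valued functions, whereas $\zeta_{1/2,N}$ is complex-valued; I would note that the average total variation extends verbatim by using the modulus in $S_\Gamma$ and that the integration-by-parts proof is unchanged, or alternatively work with real and imaginary parts. One should also check the frequency $\lambda_1 = \log 1 = 0$, at which the estimate is vacuous; this is harmless because the $n=1$ term contributes $(\log 1)^{J+1} = 0$ to the maximum and so never controls it. Once these points are dispatched, the result is a direct corollary of the estimates already established.
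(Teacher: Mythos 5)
Your proposal is correct and takes essentially the same route as the paper: both apply the $C^{n}$ estimate $|A_{n}| \leq \|f^{J}\|_{\bar{BV}}/\lambda_{n}^{J+1}$ in reverse to the truncated zeta function, with coefficients $A_{n} = n^{-1/2}$ and exponents $\lambda_{n} = \log n$, then rearrange and maximize over $n \in \{1,\dots,N\}$. Your added remarks on complex-valuedness of $\zeta_{1/2,N}$ and the vacuous frequency-zero case $n=1$ are careful points the paper glosses over, but they do not constitute a different argument.
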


\begin{proof}
From the previous theorem we obtain $\forall J$,
$$n^{-x} \leq \frac{1}{(\log n)^{J+1}} \|\zeta_{x,N}(y)^{J}\|_{\bar{BV}}.$$ 
Considering $x=\frac{1}{2}$, we get the lower estimate:
$$\|\zeta_{1/2,N}(y)^{J}\|_{\bar{BV}} \geq \max\limits_{(1,...,N)} \frac{(\log n)^{J+1}}{\sqrt{n}}.$$
\end{proof}


\begin{thebibliography}{99}

\bibitem{B55}
Besicovitch, A.-S., \emph{Almost periodic functions}, Dover Publications, Inc., New York, 1955. xiii+180 pp

\bibitem{C61}
Corduneanu, C., \emph{Almost periodic functions}, New York: Wiley Interscience, 1961.

\bibitem{JF95}
Jorge, P., Ferreira, S.-G., \emph{Nonuniform Sampling And Almost Periodic Functions},  Workshop on Sampling Theory and Applications, 1995, 272-275

\bibitem{N12}
Napolitano, A., \emph{Generalizations of Cyclostationary Signal Processing: Spectral Analysis and Applications}, John Wiley and Sons, Ltd, Chichester, UK, 2012, 492pp 

\bibitem{SR06}
Saito, N., Remy, J.-F.,\emph{The polyharmonic local sine transform: A new tool for local image analysis and synthesis without edge effect}, Applied and Computational Harmonic Analysis, vol.20, no.1, pp.41-73, 2006.

\bibitem{T67}
Taibleson, Mitchell, \emph{Fourier coefficients of functions of bounded variation}. Proc. Amer. Math. Soc. 18 1967 766. 

\end{thebibliography}
\end{document}